\newtheorem{thm}{Theorem}
\newtheorem{lem}[thm]{Lemma}
\newtheorem{claim}[thm]{Claim}
\newenvironment{proof*}{\vskip 2mm\noindent {}}{\hfill $\Box$ \vskip 2mm}
\newcommand{\ba}{\begin{align*}}
\newcommand{\ea}{\end{align*}}
\newcommand{\R}{\mathbb{R}}
\newcommand{\Ss}{\mathbb{S}}
\newcommand{\h}{\mathcal{H}}
\newcommand{\f}{\varphi}
\newcommand{\be}{\begin{equation}}
\newcommand{\ee}{\end{equation}}
\title{A short proof of the $\mathcal C^{1,1}$ regularity for the eikonal equation}
\author{Radu Ignat}
\address{Institut de Math\'ematiques de Toulouse, UMR 5219,
Universit\'e de Toulouse, CNRS, UPS
IMT,
F-31062 Toulouse Cedex 9, France.}
\email{Radu.Ignat@math.univ-toulouse.fr}
\date{\today}
\begin{document}

\maketitle

\vspace{-0.8cm}

\begin{abstract}
\smallskip
We give a short and self-contained proof of the interior $\mathcal C^{1,1}$ regularity of solutions $\f:\Omega \to \R$ to the eikonal equation $|\nabla \f|=1$ in an open set $\Omega\subset \mathbb{R}^{N}$ in dimension $N\geq 1$ under the assumption that $\f$ is pointwise differentiable in $\Omega$.
\end{abstract}

\section{Introduction}

The aim of this note is to give a short and self-contained proof of the following result known in the theory of Hamilton-Jacobi equations:
\begin{thm} 
\label{thm1}
Let $\Omega \subset \mathbb{R}^{N}$ be an open set in dimension $N\geq 1$ and  $\varphi: \Omega \rightarrow \mathbb{R}$ be a pointwise differentiable
solution to the eikonal equation $|\nabla \varphi|=1$ in $\Omega$. Then $\nabla \varphi$ is locally Lipschitz in $\Omega$.
\end{thm}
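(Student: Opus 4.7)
\medskip
\noindent The plan is a three-step argument. First I would show that $\f$ is locally $1$-Lipschitz. Then I would establish the characteristic structure: along every ray $x_0+t\nu_0$ with $\nu_0:=\nabla\f(x_0)$, the function $\f$ is affine of slope $1$ and $\nabla\f\equiv\nu_0$, as long as the ray stays in~$\Om$. Finally I would deduce from this characteristic structure a uniform quadratic expansion of $\f$, which yields the local Lipschitzness of $\nabla\f$.

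\medskip
\noindent For the first step, the restriction of $\f$ to any segment $[a,b]\subset\Om$ is a one-variable, pointwise differentiable function with derivative bounded by $|b-a|$, so the classical supremum argument (``everywhere differentiable with bounded derivative is Lipschitz'') gives the $1$-Lipschitz bound. For the second step, I would let $T^\ast$ be the supremum of $t\geq 0$ such that $x_0+s\nu_0\in\Om$ and $\f(x_0+s\nu_0)=\f(x_0)+s$ for every $s\in[0,t]$. By continuity the identity persists at $T^\ast$, and the equality case of the $1$-Lipschitz inequality on $[x_0,x_0+T^\ast\nu_0]$ forces $\f$ to be affine of slope $1$ there; combined with $|\nabla\f|=1$, Cauchy--Schwarz then yields $\nabla\f\equiv\nu_0$ on that segment. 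The delicate part, and the main obstacle I anticipate, is the strict extension beyond $T^\ast$: differentiability at the endpoint only yields an $o(s)$ error in the identity, and closing the argument requires using the pointwise eikonal equation in a full neighbourhood of the endpoint rather than merely along the ray.

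\medskip
\noindent For the third step, fix a compact $K\Subset\Om$ and set $d:=\tfrac{1}{2}\dist(K,\partial\Om)$. By the characteristic property, each $x\in K$ satisfies $\f(x\pm d\nu_x)=\f(x)\pm d$ with $\nu_x:=\nabla\f(x)$. Applying the $1$-Lipschitz inequality between $y\in K$ and each of the two endpoints $x\pm d\nu_x$, and expanding the distances for $|y-x|\ll d$, yields the uniform quadratic expansion
\[
\bigl|\f(y)-\f(x)-\nabla\f(x)\cdot(y-x)\bigr| \;\lesssim\; \frac{|y-x|^2}{d}.
\]
Writing this bound once at $x$ and once at $y$, and testing against a vector $h$ of modulus $|y-x|$ chosen in the direction of $\nabla\f(x)-\nabla\f(y)$, upgrades the quadratic bound into $|\nabla\f(x)-\nabla\f(y)|\lesssim|y-x|/d$ on $K$, which is the desired local Lipschitzness of $\nabla\f$.
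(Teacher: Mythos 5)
Your Step 2 is the crux of the theorem, and it is exactly the part that is not proved. The continuation scheme you outline does not get off the ground: defining $T^\ast$ as the supremum of times up to which the exact identity $\f(x_0+s\nu_0)=\f(x_0)+s$ holds, there is no reason that $T^\ast>0$ in the first place, because differentiability at $x_0$ only gives $\f(x_0+s\nu_0)=\f(x_0)+s+o(s)$, and the same obstruction that you flag at the extension step already blocks the initiation step. Pointwise $o(s)$ errors cannot be iterated or integrated along the ray, so the open--closed argument collapses; you correctly diagnose that one must use the equation in a full neighbourhood, but you do not supply the mechanism. The paper's mechanism (following Caffarelli--Crandall) is to abandon the ray and look at extrema over balls: with $M_r=\max_{\bar B_r(x_0)}\f$ and $m_r=\min_{\bar B_r(x_0)}\f$, one shows $r\mapsto M_r$ is $1$-Lipschitz and nondecreasing, and that its derivative is $\geq 1$ a.e.\ because at a maximum point $x_r^+$ one can move by $h\nabla\f(x_r^+)$ into $\bar B_{r+h}(x_0)$ and use differentiability at $x_r^+$ together with $|\nabla\f(x_r^+)|^2=1$; integrating the Lipschitz function gives the \emph{exact} identities $M_r=\f(x_0)+r$, $m_r=\f(x_0)-r$. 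Then the equality case of the $1$-Lipschitz bound between the max and min points at radius $R$ forces them to be antipodal, $\f$ to be affine along the diameter through $x_0$, and $\nabla\f$ to equal $\nabla\f(x_0)$ there. This converts the single-point $o(s)$ information into an a.e.\ differential identity for a Lipschitz function of $r$, which is precisely the step your proposal lacks; without it (or an equivalent input such as semiconcavity estimates from viscosity-solution theory) the characteristic structure is asserted rather than proved.

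Your Steps 1 and 3 are fine, and Step 3 is in fact a genuinely different and perfectly valid route to the Lipschitz bound on $\nabla\f$: from $\f(x\pm d\nu_x)=\f(x)\pm d$ and the $1$-Lipschitz bound against both endpoints you get the two-sided quadratic expansion $|\f(y)-\f(x)-\nabla\f(x)\cdot(y-x)|\leq C|y-x|^2/d$ (i.e.\ semiconcavity and semiconvexity with linear modulus), and testing the two expansions at a point displaced from $y$ by $|y-x|$ in the direction of $\nabla\f(x)-\nabla\f(y)$ gives $|\nabla\f(x)-\nabla\f(y)|\leq C|y-x|/d$. The paper instead runs a direct geometric argument on two characteristics (projection, the points $z$ and $w$ at times $\pm d$, the bound $|z-w|\geq 2d$, and the cosine rule to control the angle between $\nabla\f(x)$ and $\nabla\f(y)$); your version trades that trigonometry for the standard ``touching by paraboloids from both sides'' argument, which is arguably cleaner. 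But this only completes the proof once the characteristic identities of Step 2 are actually established.
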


The usual (standard) proof of this result is based on the following steps (see e.g. Lions \cite{Lions}, Cannarsa-Sinestrari \cite{CS}): first, one checks that $\f$ (and $-\f$) is a viscosity solution to the eikonal equation (see \cite[Definition 5.2.1]{CS}); second, one proves that $\f$ is both semiconcave and semiconvex  with linear modulus (see \cite[Theorem 5.3.7]{CS}). Third, one proves that $\f$ is $C^1$ (see \cite[Theorem 3.3.7]{CS}) and finally, that $\f$ is locally $C^{1,1}$ in $\Omega$ (see \cite[Corollary 3.3.8]{CS}).

Our approach is based on the geometry of characteristics associated to the eikonal equation. More precisely, if $x_{0} \in \Omega$, we say that $X:=X_{x_0}$ is a characteristic of a solution $\f$ passing through $x_0$ in some time interval $t \in[-T, T]$ if
\be
\label{cara}
\left\{\begin{array}{l}\dot{X}(t)=\nabla \f(X(t)) \textrm{ for $t\in [-T,T]$},\\ X(0)=x_{0}.\end{array}\right.
\ee
Then the beautiful proof of Caffarelli-Crandall \cite[Lemma 2.2]{CC} shows in a short and self-contained manner that every point $x_0\in \Omega$ has a characteristic $X_{x_0}$ that is a straight line along which $\nabla \f$ is constant and $\f$ is affine. Finally, we give a geometric argument on the structure of characteristics yielding the locally Lipschitz regularity of $\nabla \f$ in $\Omega$.

The regularity result in Theorem \ref{thm1} is optimal: such solution $\f$ of the eikonal equation is not $C^2$ in general (see e.g. \cite[Proposition 1]{Ign}). We mention that a more general regularizing effect (i.e., $\nabla \f$ is locally Lipschitz away from vortex point singularities) is proved under a weaker assumption $\nabla \f\in W^{1/p,p}$ for $p\in [1,3]$, see \cite{Ign, DI}. Similar results are obtained in the context of the Aviles-Giga model which can be seen as a regularization of the eikonal equation (see \cite{JOP, GL}).

\section{Proof of the main result}

The first step is to show that each point $x_0\in \Omega$ has a characteristic $X:=X_{x_0}$ that is a straight line in direction $\nabla \f(x_0)$. Moreover, $\nabla \f$ is constant while $\f$ is affine along this characteristic. This fact yields $\f\in C^1(\Omega)$. In order to have a self-contained proof of Theorem \ref{thm1}, we repeat here the very nice argument of  
Caffarelli-Crandall \cite[Lemma 2.2]{CC} based on a maximum type principle for the eikonal equation.

\begin{lem}
\label{lem1}
Let $\Omega \subset \mathbb{R}^{N}$ be an open set and  $\varphi: \Omega \rightarrow \mathbb{R}$ be a 
pointwise differentiable solution of the eikonal equation $|\nabla \varphi|=1$ in $\Omega$. Then for every $x_0\in \Omega$, $X(t)=x_0+t\nabla \f(x_0)$ is a characteristic of \eqref{cara} and 
$$\nabla \varphi(X(t))=\nabla \varphi\left(x_{0}\right), \,  \f(X(t))=\f(x_0)+t, \, \forall t \in[-T, T]$$ for some $T>0$. As a consequence, $\f\in C^1(\Omega)$.
\end{lem}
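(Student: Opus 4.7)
The plan is to reproduce the Caffarelli-Crandall argument cited in the statement. First, $\varphi$ is locally $1$-Lipschitz: pointwise differentiability of $\varphi$ on all of $\Omega$ together with $|\nabla\varphi|\equiv 1$ implies that the restriction of $\varphi$ to any line segment inside $\Omega$ is everywhere differentiable with derivative bounded by $1$, hence Lipschitz with constant $1$ by the mean-value theorem for pointwise differentiable functions. Next, fix $x_{0}\in\Omega$, set $p:=\nabla\varphi(x_{0})$ (so $|p|=1$), and introduce the defect
\[
\psi(x):=\varphi(x)-\varphi(x_{0})-p\cdot(x-x_{0}).
\]
Then $\psi$ is pointwise differentiable with $\psi(x_{0})=0$, $\nabla\psi(x_{0})=0$, and expanding $1=|\nabla\varphi|^{2}=|p+\nabla\psi|^{2}$ gives the key identity
\[
\partial_{p}\psi\;=\;-\tfrac{1}{2}|\nabla\psi|^{2}\;\leq\;0,
\]
so $\psi$ is non-increasing along every line parallel to $p$.

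Choose $T>0$ with $\overline{B_{T}(x_{0})}\subset\Omega$ and set $X(t)=x_{0}+tp$. The function $h(t):=\psi(X(t))$ is pointwise differentiable on $[-T,T]$ with $h'\leq 0$, $h(0)=0$, $h'(0)=0$, and, by the $1$-Lipschitz bound on $\varphi$, $h\leq 0$ on $[0,T]$ and $h\geq 0$ on $[-T,0]$. The heart of the proof is the maximum-type principle that upgrades these to $h\equiv 0$ on $[-T,T]$. I would proceed by a propagation argument: set $t_{*}:=\sup\{s\in[0,T]:h\equiv 0\text{ on }[0,s]\}$; closedness and monotonicity force $h\equiv 0$ on $[0,t_{*}]$. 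Assume for contradiction $t_{*}<T$ and let $y:=X(t_{*})$, $q:=\nabla\varphi(y)$. The equality $\varphi(y+sp)-\varphi(y)=s$ on $s\in[-t_{*},0]$, combined with the differentiability expansion $\varphi(y+sp)-\varphi(y)=s(q\cdot p)+o(|s|)$ as $s\to 0^{-}$, forces $q\cdot p=1$, hence $q=p$. The Caffarelli-Crandall local maximum-principle argument at $y$ then supplies $\tau>0$ with $h(t_{*}+s)=0$ on $[0,\tau]$, contradicting the definition of $t_{*}$; the interval $[-T,0]$ is symmetric.

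Once $h\equiv 0$, i.e.\ $\varphi(X(t))=\varphi(x_{0})+t$, differentiating gives $\nabla\varphi(X(t))\cdot p=1$, which together with $|\nabla\varphi(X(t))|=1$ forces $\nabla\varphi(X(t))=p$ on $[-T,T]$. For the $C^{1}$ conclusion, take $x_{n}\to x_{0}$, set $p_{n}:=\nabla\varphi(x_{n})$, and extract a subsequence $p_{n}\to p^{*}$ with $|p^{*}|=1$. Applying the previous step at each $x_{n}$ on a time interval uniform in $n$ (since $\dist(x_{n},\partial\Omega)\to\dist(x_{0},\partial\Omega)$) one obtains $\varphi(x_{n}+tp_{n})=\varphi(x_{n})+t$; passing to the limit by continuity of $\varphi$ yields $\varphi(x_{0}+tp^{*})=\varphi(x_{0})+t$ for small $t$. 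The pointwise differentiability expansion at $x_{0}$ then reads $t=t(p\cdot p^{*})+o(t)$, forcing $p\cdot p^{*}=1$ and hence $p^{*}=p$; the full sequence converges, so $\nabla\varphi$ is continuous at $x_{0}$. The main obstacle is the maximum-principle step: the one-sided bounds and monotonicity of $h$ alone are not enough, and one really has to use the pointwise differentiability of $\varphi$ at every point of the characteristic.
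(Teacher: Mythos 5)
There is a genuine gap, and it sits exactly at the step you yourself call ``the heart of the proof.'' Your preliminary observations are fine: $\varphi$ is locally $1$-Lipschitz, the identity $\partial_p\psi=-\tfrac12|\nabla\psi|^2\le 0$ makes $h(t)=\psi(X(t))$ non-increasing, $h(0)=h'(0)=0$, and the sign bounds on $h$ follow. But these facts alone cannot yield $h\equiv 0$, and your continuation argument does not supply the missing mechanism. At the putative first time $t_*$ you correctly deduce $\nabla\varphi(y)=p$ for $y=X(t_*)$, but all that differentiability at $y$ then gives is $h(t_*+s)=o(s)$ as $s\to 0^+$; together with $h'\le 0$ and $h(t_*)=0$ this is perfectly consistent with, say, $h(t_*+s)=-s^2$, so nothing propagates. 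To close the argument you invoke ``the Caffarelli--Crandall local maximum-principle argument at $y$'' as a black box producing $\tau>0$ with $h\equiv 0$ on $[t_*,t_*+\tau]$ --- but that statement \emph{is} the lemma being proved (applied at $y$), so the citation is circular and the decisive idea is simply absent from your write-up. Informally: information along the fixed line $X$ (monotonicity plus first-order data at single points of the line) can never rule out quadratic decay of $h$; one must use the equation at points off the line.

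This is precisely what the paper's (Caffarelli--Crandall) argument does, and it is not a ``local continuation'' at all. One sets $M_r=\max_{\bar B_r(x_0)}\varphi$, shows $r\mapsto M_r$ is $1$-Lipschitz and, at a.e.\ $r$, estimates
$$\liminf_{h\to 0^+}\frac{M_{r+h}-M_r}{h}\;\ge\;\liminf_{h\to 0^+}\frac{\varphi\bigl(x_r^++h\nabla\varphi(x_r^+)\bigr)-\varphi(x_r^+)}{h}\;=\;|\nabla\varphi(x_r^+)|^2=1,$$
where $x_r^+$ is a maximum point on $\bar B_r(x_0)$: the crucial point is that the maximum point moves with $r$ and differentiability is exploited \emph{at that point}, wherever it lies in the ball --- exactly the off-the-line input your scheme lacks. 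This gives $M_R=\varphi(x_0)+R$ and, by symmetry, $m_R=\varphi(x_0)-R$; the $1$-Lipschitz bound then forces $[x_R^-,x_R^+]$ to be a diameter through $x_0$ along which $\varphi$ is affine, which identifies the characteristic line and yields $\nabla\varphi=e_*=\nabla\varphi(x_0)$ along it. Your final two steps (deducing $\nabla\varphi(X(t))=p$ from $h\equiv 0$, and the continuity of $\nabla\varphi$ via $p\cdot p^*=1$, which is a slight and legitimate variant of the paper's uniqueness-of-maximum argument) are correct, but they rest on the main statement you have not actually established.
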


\begin{proof}
This proof follows the lines in \cite[Lemma 2.2]{CC}. Let $R>0$ be such that $\bar B_R(x_0)\subset \Omega$ and consider
$$M_r=\max_{\bar B_r(x_0)} \f, \quad m_r=\min_{\bar B_r(x_0)} \f, \quad \forall r\in [0,R].$$

\begin{claim}
\label{ballext}
 $M_r=\f(x_0)+r$ and $m_r=\f(x_0)-r$ for every $r\in [0,R]$.
\end{claim}

\begin{proof}
 For $r\in [0,R]$, we pick some maximum point $x_r^+\in \bar B_r(x_0)$ such that $\f(x_r^+)=M_r$. First, we show that $r\in [0,R]\mapsto M_r$ is a nondecreasing $1$-Lipschitz function. Indeed, for $R\geq r>\tilde r$, as $|x_r^+-x_0|\leq r$, we can find a vector $e\in \R^N$ such that $|e|\leq r-\tilde r$ and $|x_r^++e-x_0|\leq \tilde r$, i.e., $x_r^++e\in \bar B_{\tilde r}(x_0)$; this yields
$$0\leq M_r-M_{\tilde r}\leq \f(x_r^+)-\f(x_r^++e)\leq |e|\leq r-\tilde r$$
because $\f$ is $1$-Lipschitz. Second, we prove that $\frac{d M_r}{dr}=1$ a.e. in $(0,R)$ because for $r\in (0,R)$ and
for small $h>0$, as $x_r^++ h\nabla \f(x_r^+)\in \bar B_{r+h}(x_0)$, we have
$$
\liminf_{h\to 0}\frac{M_{r+ h}-M_r}{h}\geq \liminf_{h\to 0} \frac{\f(x_r^+ + h\nabla \f(x_r^+))-\f(x_r^+)}{h}= |\nabla \f(x_r^+)|^2=1.
$$
As $M_0=\f(x_0)$, we conclude $M_r=\f(x_0)+r$. Up to changing $\f$ in $-\f$, one also gets $m_r=\f(x_0)-r$ for $r\in [0,R]$.
\end{proof}

\smallskip

 To conclude the proof of Lemma \ref{lem1}, pick some minimum point $x_R^-\in \bar B_R(x_0)$ such that $\f(x_R^-)=m_R=\f(x_0)-R$ (by  Claim \ref{ballext}). As $\f$ is $1$-Lipschitz,
 we have, again by  Claim \ref{ballext}:
 $$
 2R=\f(x_R^+)-\f(x_R^-)\leq |x_R^+-x_R^-|\leq 2R,
 $$
 which means that $[x_R^+, x_R^-]$ is a diameter in  $ \bar B_R(x_0)$. 
 Note that $x_R^+$ (resp. $x_R^-$) is the unique maximum (resp. minimum) of $\f$ in $\bar B_R(x_0)$
 because if $\tilde x_R^+$ is another maximum, then it has to be antipodal to  $x_R^-$, that is, $\tilde x_R^+=x_R^+$ (the same for the uniqueness of $x_R^-$). In particular, $e_*=\frac{x_R^+-x_0}R\in \Ss^{N-1}$. Define $g:[-R,R]\to \R$ by $g(r)=\f(x_0+re_*)-\f(x_0)$. Then $g$ is $1$-Lipschitz and $g(\pm R)=\f(x_R^\pm)-\f(x_0)=\pm R$ (by Claim 3). So $g(r)=r$ for every $r\in (-R,R)$ yielding $1=g'(r)=e_*\cdot \nabla \f(x_0+re_*)$ for every $r$. Thus, $\nabla \f(x_0+re_*)=e_*$ for every $r\in [-R,R]$, in particular, $e_*=\nabla \f(x_0)$, i.e.,
 $X(r)=x_0+r\nabla \f(x_0)$ is a characteristic of \eqref{cara} and 
 $$\nabla \varphi(X(r))=\nabla \varphi\left(x_{0}\right), \f(x_0+r\nabla \f(x_0))=\f(x_0)+r, \quad \forall r \in[-R, R].$$ In particular, the (unique) maximum and minimum of $\f$ in 
 $\bar B_R(x_0)$ are achieved at the points $x_R^\pm=x_0\pm R\nabla \varphi(x_{0})$.
 
 It remains to prove that $\nabla \f$ is continuous in $\Omega$. Indeed, let $x_n\to x_0$ in $\Omega$ and $\bar B_R(x_n)\subset \Omega$ for large $n$. Up to a subsequence, we may assume that $\nabla \f(x_n)\to e\in \Ss^{N-1}$. By above, we know that $\f(x_n+R\nabla \f(x_n))=\f(x_n)+R$. 
 Passing to the limit, we obtain $\f(x_0+Re)=\f(x_0)+R$, meaning that $x_0+Re$ is the maximum of $\f$ in 
$\bar B_R(x_0)$. By uniqueness of the maximum point $x_R^+$, we conclude that $e=\nabla \f(x_0)$. The uniqueness of the limit $e$ for such subsequences yield the convergence of the whole sequence $(\nabla \f(x_n))_n$ to $\nabla \f(x_0)$.
\end{proof}

\bigskip

\begin{proof}[Proof of Theorem \ref{thm1}]  Let $B$ be a ball, $\bar B\subset \Omega$ and we consider $d \in (0, \frac{\operatorname{dist}(B, \partial \Omega)}5)$. We will prove the following:

\begin{claim} \label{cl2} There exists a universal constant $C>0$ such that $$|\nabla \varphi(x)-\nabla \varphi(y)| \leqslant \frac{C}{d}|x-y|,\quad  \forall x, y \in B \, \textrm{with} \, |x-y|<\frac d{10}.$$
\end{claim}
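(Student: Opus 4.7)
The plan is to derive the Lipschitz estimate directly from Lemma \ref{lem1} combined with the $1$-Lipschitz character of $\f$, using only elementary algebra; no separate geometric discussion of how characteristics fail to cross will be needed.

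First I would fix $x, y \in B$, set $p = \nabla \f(x)$ and $q = \nabla \f(y)$ (unit vectors), and observe that, since $\dist(B, \partial \Omega) > 5d$, Lemma \ref{lem1} applies at both $x$ and $y$ with $T = 5d$, giving
\[
\f(x + tp) = \f(x) + t, \qquad \f(y + sq) = \f(y) + s, \qquad \text{for all } |t|, |s| \leq 5d.
\]
Denote by $\tilde B$ the open ball concentric with $B$ whose radius equals that of $B$ plus $5d$; the choice of $d$ gives $\bar{\tilde B} \subset \Omega$, the set $\tilde B$ is convex, and every endpoint $x+tp$, $y+sq$ above lies in it. Since $\f \in C^1(\Omega)$ with $|\nabla \f| \equiv 1$ (again by Lemma \ref{lem1}), $\f$ is $1$-Lipschitz on $\tilde B$. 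Squaring this $1$-Lipschitz inequality between the points $x+tp$ and $y+sq$ yields, for all admissible $t, s$,
\[
\bigl(\f(x) - \f(y) + t - s\bigr)^2 \leq |x - y + tp - sq|^2.
\]

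The main computation is to evaluate this at $(t,s) = (r, -r)$ and $(t,s) = (-r, r)$ for $r \in (0, 5d]$ and add the two resulting inequalities. The cross terms in $\f(x) - \f(y)$ and in $(x-y) \cdot (p+q)$ cancel, and the elementary identity $|p-q|^2 + |p+q|^2 = 4$ for unit vectors collapses the right-hand side to
\[
\bigl(\f(x) - \f(y)\bigr)^2 + r^2 |p - q|^2 \leq |x-y|^2.
\]
Taking $r = 5d$ gives $|\nabla \f(x) - \nabla \f(y)| \leq \tfrac{1}{5d}|x-y|$, proving the claim with the universal constant $C = 1/5$.

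The point where I expect a reader to pause is the algebraic packaging itself: the geometric content --- that the characteristic segments through nearby points are nearly parallel --- is encoded entirely in the combination of the $1$-Lipschitz bound for $\f$ with the affine identities along characteristics provided by Lemma \ref{lem1}, so no case analysis on the relative positions of the two segments is needed. As a byproduct, the smallness hypothesis $|x-y| < d/10$ plays no role in this approach.
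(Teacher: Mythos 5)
Your proof is correct, and it takes a genuinely different (and in fact stronger) route than the paper. The paper argues geometrically: it first disposes of the case where the two characteristics coincide, notes they cannot intersect otherwise, projects $y$ onto the line $X_x$, and then runs a trigonometric estimate (Pythagoras, the law of cosines, and an angle-addition bound) to show the angle between $\nabla\f(x)$ and $\nabla\f(y)$ is $O(|x-y|/d)$; this is where the smallness hypothesis $|x-y|<d/10$ and the non-explicit universal constant come from. You instead combine the affine identities $\f(x+tp)=\f(x)+t$, $\f(y+sq)=\f(y)+s$ (valid for $|t|,|s|\le 5d$ since $\operatorname{dist}(B,\partial\Omega)>5d$) with the $1$-Lipschitz bound on the convex ball $\tilde B$, evaluate at $(t,s)=(\pm r,\mp r)$ and add; the cross terms cancel and the parallelogram identity $|p+q|^2+|p-q|^2=4$ gives the clean inequality $(\f(x)-\f(y))^2+r^2|\nabla\f(x)-\nabla\f(y)|^2\le|x-y|^2$ for $r\le 5d$, hence the claim with $C=1/5$, for \emph{all} $x,y\in B$ and with no case analysis on the characteristics. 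I checked the algebra and it is right; your estimate is also sharp in the model case $\f=|x|$. Two small remarks: the statement of Lemma \ref{lem1} only says ``for some $T>0$'', so you should cite the quantitative version actually established in its proof (namely $T=R$ for any $R$ with $\bar B_R(x_0)\subset\Omega$) --- the paper's own proof of the claim relies on the same fact, so this is presentational, not a gap; and your argument is essentially a compact simultaneous encoding of the semiconcavity/semiconvexity estimates mentioned in the introduction, here obtained in two lines from Lemma \ref{lem1}, which buys an explicit constant and removes the restriction $|x-y|<d/10$.
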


\begin{proof} Let $X_x$ and $X_y$ be the characteristics passing through $x$ and $y$ constructed in Lemma \ref{lem1} (that are lines in direction $\nabla \f(x)$ and $\nabla \f(y)$). If $X_x$ and $X_y$ coincide inside $B$ (in particular, $\nabla \f(x)=\pm  \nabla \f(y)$ by Lemma \ref{lem1}), then Lemma \ref{lem1} implies that $\nabla\f(x)=\nabla \f(y)$ and the claim is trivial in that case. Otherwise, $\nabla \f(x)\neq \pm  \nabla \f(y)$ and $X_x$ and $X_y$ cannot intersect inside $\Omega$ (as $\nabla \f$ is continuous in $\Omega$ by Lemma \ref{lem1}). Let $|x-y|<\frac d{10}$ and $x'$ be the projection of $y$ on $X_x$. Clearly, 
$$\ell=|x'-y|\leq |x-y|<\frac{d}{10},$$ $\operatorname{dist}(x', \partial \Omega)>5d-\ell$ and $\nabla \f(x')=\nabla \f(x)$. Up to changing $\f$ in $-\f$, we may assume that $\f(x')\leq \f(y)$ and up to an additive constant, we also may assume that $0=\f(x')\leq \f(y)=a$. As $|\nabla \f|=1$ on the segment $[x'y]$, we deduce that $0\leq a=\f(y)-\f(x')\leq |x'-y|=\ell$. Let $z$ be the point on the characteristic $X_y$ reached at time $t=d$ in direction  $\nabla \f(y)$, i.e., $|z-y|=d$, $\nabla \f(y)=\frac{z-y}d$ and $\f(z)=a+d$; in particular, $z\in \Omega$. Let $w$ be the point on the characteristic $X_{x'}$ reached at time $t=-d$ in direction  $\nabla \f(x')$, i.e., $|x'-w|=d$, $\nabla \f(x')=\frac{x'-w}d$ and $\f(w)=-d$; in particular, $w \in \Omega$. We deduce that
$$|z-w|=\int_{[zw]} |\nabla \f|\, d\h^1\geq \left| \int_{[zw]} \nabla \f \cdot \frac{z-w}{|z-w|}\, d\h^1\right|=|\f(z)-\f(w)|=2d+a\geq 2d.$$ 
By Pitagora's formula in the triangle $x'yw$ we have 
$|y-w|^2=\ell^2+d^2$.  
Denoting $\alpha$ the angle between $\vec{wy}$ and $\nabla \f(y)$, the cosine formula in the triangle $wyz$ yields 
\begin{align*}
-\cos \alpha&=\frac{|y-w|^2+|y-z|^2-|z-w|^2}{2|y-z|\cdot |y-w|}\\
&=
\frac{2d^2+\ell^2-|z-w|^2}{2d\sqrt{d^2+\ell^2}}\leq -\frac{2d^2-\ell^2}{2d\sqrt{d^2+\ell^2}}=-1+O(\frac{\ell^2}{d^2}).
\end{align*} 
As $\alpha\in (0, \frac\pi2)$, it yields $\sin^2 \alpha=1-\cos^2 \alpha\leq 2(1-\cos \alpha)=O(\frac{\ell^2}{d^2})$. So, $\sin \alpha=O(\frac{\ell}d)$. Denoting $\beta\in (0, \frac\pi2)$ the angle between $\vec{wy}$ and $\nabla \f(x)$, we compute in the triangle $x'yw$:
$$\sin \beta\leq \tan \beta=\frac{|y-x'|}{|x'-w|}=\frac{\ell}d.$$ In particular, $0\leq \alpha+ \beta\leq \frac\pi2$ if $\ell<\frac{d}{10}$.  Finally, denoting $\gamma$ the angle between $\nabla \f(x)$ and $\nabla \f(y)$, the triangle inequality yields $$\sin \gamma\leq \sin (\alpha+\beta)\leq \sin \alpha+\sin \beta=O(\frac{\ell}d).$$ 
We conclude
$$|\nabla \varphi(x)-\nabla \varphi(y)|=2\sin \frac\gamma2=O(\sin \gamma)=O(\frac{\ell}d)\leq \frac{C}{d}|x-y|$$
for some universal $C>0$.
\end{proof}

The conclusion of Theorem \ref{thm1} follows.
\end{proof}

\medskip

\noindent {\bf Acknowledgement}.
The author thanks Xavier Lamy for very useful discussions. The author  is partially supported by the ANR projects ANR-21-CE40-0004 and
ANR-22-CE40-0006-01.

\bigskip

\end{document}